\theoremstyle{remark}
\numberwithin{equation}{section}
\begin{document}
\title[On $SQT$-groups]{Finite groups in which strong $4$-quasinormality is
transitive relation. }
\author{Khaled A. Al-Sharo}
\address{Department of Mathematics, Al al-Bayt University, Mafraq-25113,
JORDAN. }
\email{sharo\_kh@yahoo.com}
\thanks{This paper is in final form and no version of it will be submitted
for publication elsewhere.}
\subjclass[2010]{Primary 20D10, 20D15}
\date{}
\keywords{ Finite group, permutable subgroup, $4$-quasinormal subgroups. }
\dedicatory{}
\maketitle

\begin{center}
\today
\end{center}

\begin{quote}
{\small \textsc{Abstract.} }Two subgroups $H$ and $K$ are $4$-permutable in $%
G$ if $\left\langle H\cup K\right\rangle =HKHK$, and $H$ is strong $4$%
-quasinormal in $G$ if $H$ is $4$-permutable with every subgroups $K$ of $G$%
. A fnite group $G$ is called $Sq4T$-group if strong $4$-quasinormality is
transitive relation among the subgroup of $G$. In this paper we studyfnite $%
Sq4T$-groups. In particular, we prove that every $Sq4T$-group is solvable.
Some facts related to $4$-permutability and illustrating examples are also
presented in this paper.
\end{quote}

\section{Introduction}

All considered groups are finite. Two subgroups $H$ and $K$ of a group $G$
are said to be permutable subgroups if $\left\langle H,K\right\rangle =HK$.
A subgroup $H$ of the group $G$ is permutable(quasinormal) in $G$ if $H$
permutes with every subgroup of $G$. If H permutes with all Sylow subgroups
of $G$ then $H$ is called $S$-permutable.

It should be remarked that permutable subgroups had been introduced in 1973
(see, \cite{ore}) by Ore who called them "\emph{quasinormal}"; and $S$%
-permutable subgroups were introduced in 1975 (see, \cite{Agrawal 1975} ) by
Agrawal who called them $S$-quasinormal ($\pi $-quasinormal).

A group $G$ is called a $PT$-group if, whenever $H$ is a permutable subgroup
of $G$ and $K$ is a permutable subgroup of $H$, then $K$ is a permutable
subgroup of $G$. Solvable\ $PT$-groups were first introduced and classified
by Zacher \cite{Zach}. Zacher's main result asserts that \textit{a group }$G$%
\textit{\ is a soluble }$PT$\textit{-group if and only if it has a normal
abelian Hall subgroup }$L$\textit{\ of odd order such that }$G/L$\textit{\
is a nilpotent modular group and elements of }$G$\textit{\ induce power
automorphisms in }$L$\textit{. }

Qusinormal subgroups have been studied by many authors, and generalized in
many different ways. Among the interesting generalizations is the concept of
$4$-quasinormality\ which was introduced by John Cossey and Stewart
Stonehewer in \cite{CosStew} as follows: A subgroup $H$ of the the group $G$
is called $4$\emph{-quasinormal} in $G$ \ and write ($H$ \emph{qn4} $G$) if $%
\left\langle H,K\right\rangle =HKHK$ for every \textit{cyclic} subgroup $K$
of $G$, and $H$ is \emph{strongly }$4$\emph{-quasinormal} in $G$ and we
write ($H$ \emph{sqn4} $G$) if $\left\langle H,K\right\rangle =HKHK$ for
\emph{every subgroup} $K$ of $G$. It is natural to thinck about groups in
which strong $4$-quasinormality is transitive relation. In this paper
introduce the class of $Sq4T$-groups as follows: a group $G$ is $Sq4T$-group
if strongly $4$-quasinormality is transitive in $G$. That is if $H$ \emph{%
sqn4} $K$ , and $K$ \emph{sqn4} $G$, then $H$ \emph{sqn4} $G$. Based on the
properties of $PT$-groups we descuss and develop some properties of $Sq4T$%
-groups.

\section{Preliminaries and Summary of the Results}

We start by introducing the concept $Sq4T$-groups and some related
consequences.

\begin{definition}
Let $H$ and $K$ be two subgroups of \ $G$. Then:

(1) Then $H$ \ is $4$-permutable with $K$ and we write ($H$ \emph{perm4} $K$%
) if $\left\langle H,K\right\rangle =HKHK$ ,

(2) $H$ is \emph{strongly }$4$\emph{-quasinormal} in $G$ and we write ($H$
\emph{sqn4} $G$) if $\left\langle H,K\right\rangle =HKHK$ for \emph{every
subgroup} $K$ of $G$.
\end{definition}

If $H$ is permutable subgroup in $G$ then $HK=KH$, for every $K$ subgroup of
$G$. This implies that $HKHK=H(KH)K=H(HK)K=HK=\left\langle H,K\right\rangle $%
. Therefor, every permutable subgroup is strongly\emph{\ }$4$-quasinormal.

\begin{remark}
\label{r1} A strongly\emph{\ }$4$-quasinormal subgroup need not be
permutable in $G$ as the following example shows.
\end{remark}

\begin{example}
\label{e1} Let $G=S_{3}$ be the symmetric group on three letters and let $%
H=\left\langle (1,2)\right\rangle $, $K=\left\langle (1,3)\right\rangle $.
Then $H$ does not permute with $K$ but $\left\langle H,K\right\rangle
=G=HKHK $. Similarly, $\left\langle H,\left\langle (2,3)\right\rangle
\right\rangle =S_{3}=H\left\langle (2,3)\right\rangle H\left\langle
(2,3)\right\rangle $. It is not difficult to check $4$-permutability of $H$
with the remaing subgroups of $S_{3}$. So, $H$ is strongly $4$-quasinormal
in $G$. In fact all subgroups of $S_{3}$ will be $4$-quasinormal in $S_{3}$.
\end{example}

\begin{definition}
A group $G$ is called $Sq4T$-group if strongly quasinormality is transitive
relation in $G$.
\end{definition}

\begin{remark}
\label{r2} Neither $PT$-group need to be $Sq4T$-group nor $Sq4T$-group need
be $PT$-group.
\end{remark}

The following is an example of $PT$-group that is not $Sq4T$-group.

\begin{example}
\label{e2} Let $G=D_{12}=\left\langle
r,s:r^{6}=s^{2}=1,r^{s}=r^{5}\right\rangle $ be the dihedral group of order $%
12$. If we let $L=\left\langle r^{2}\right\rangle $. Then \textit{\ }$%
L\backsimeq Z_{3}$\textit{\ }is Hall subgroup of $G$ of odd order such that $%
G/L\backsimeq Z_{2}\times Z_{2}$ is a nilpotent modular group and elements
of $G$ induce power automorphisms in $L$. So, $G$ is $PT$-group. To see that
$G$ is not $Sq4T$-group. consider the subgroup $H=\left\langle
r^{2},s\right\rangle $. To show that $H$ \emph{sqn4} $G$, we divide the
subgroups of $G$ into two types:

\textit{type 1:} Are all subgroups of $G$ that are not contained in $H$. If $%
T$ is subgroup of $G$ of type 1 then it is clear that $HTHT=G=\left\langle
H,T\right\rangle $.

\textit{type 2}: A subgroups $R$ of $G$ is of type 2 if $R$ is contained in $%
H$. For type 2 subgroups, note that $HRHR=H=\left\langle H,R\right\rangle $.
This shows that, $H$ \emph{sqn4} $G$. Now let $K=\left\langle s\right\rangle
$ be a subgroup of $G$. From Example \ref{e1} we know $H$ \emph{sqn4} $G$.
So, we have:

$K$ \emph{sqn4} $H$, and $H$ \emph{sqn4} $G$. But, $K$ \ is not strong $4$%
-quasinormal $\ $in $G$. To see that, $K$ is not strong $4$-quasinormal in $%
G $. Let $M=\left\langle sr\right\rangle \backsimeq Z_{2}$. Then, $KMKM$ \
has $8$ elements which could not be a subgroup of $G$. Therefore, $K$ is not
strong $4$-quasinormal in $G$. Hence, $G$ is not $Sq4T$-group\
\end{example}

\begin{example}
\label{Example 2} As an example of $Sq4T$-group that is not $PT$-group, we
may consider $G=D_{8}=\left\langle
r,s:r^{4}=s^{2}=1,r^{s}=r^{-1}\right\rangle $ the dihedral group of order $8$%
. It is clear that $\left\langle s\right\rangle $ is permutable in $%
\left\langle r^{2},s\right\rangle $, and $\left\langle r^{2},s\right\rangle $
is permutable in $G$. But $\left\langle s\right\rangle $ is not permutable
in $G$. Dividing the subgroups of D8 into two types (permutable and
non-permutable) and using an argument similar to that used in Example \ref%
{e1}. We get every subgroup of $G$ is strong $4$-quasinormal in $G$.
\end{example}

\begin{example}
\label{Example 3}A well known fact about permutable subgroup is Ore's
theorem (see \cite{Ore 1939})\ which asserts that \textit{in a finite group }%
$G$\textit{\ every permutable subgroup }$H$\textit{\ is subnormal in }$G$.
If we consider the alternating group $A_{5}$ the normal subgroups are $1$,
and $A_{5}$\ only. From Ore's theorem $1$ and $A_{5}$ are the only possible
permutable subgroups in $A_{5}$. Hence, $A_{5}$ is an example of $PT$-group
that is not solvable.
\end{example}

One of our results in this paper is given in the next theorem.

\begin{theorem}
\label{Theorem A} Every $Sq4T$-group is solvable.
\end{theorem}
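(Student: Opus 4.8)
The plan is to argue by contradiction through a minimal counterexample, reducing first to a non-abelian simple group and then killing that case with an explicit transitivity-violating chain. I would begin from two closure properties of the class. \emph{(a) Closure under quotients.} If $N\normal G$ and $N\le X\le Y\le G$, then a direct computation — using $|HK|=|H||K|/|H\cap K|$, the fact that $HKHK$ and $\gen{H,K}$ are unions of $N$-cosets once they contain $N$, and the device of replacing a subgroup $W\le Y$ by $WN$ — shows $X/N$ is strongly $4$-quasinormal in $Y/N$ if and only if $X$ is strongly $4$-quasinormal in $Y$; hence a chain $\bar A$ \emph{sqn4} $\bar B$ \emph{sqn4} $G/N$ with $\bar A$ not \emph{sqn4} $G/N$ lifts to such a chain in $G$, so $G/N$ is an $Sq4T$-group whenever $G$ is. \emph{(b) Inheritance.} In any $Sq4T$-group every normal subgroup is permutable with every subgroup, hence strongly $4$-quasinormal (see the discussion before Remark \ref{r1}); walking up a subnormal series and using transitivity, every subnormal subgroup of $G$ is strongly $4$-quasinormal in $G$. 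Moreover, if $M\le G$ is strongly $4$-quasinormal in $G$ then $M$ is itself an $Sq4T$-group: a chain $A$ \emph{sqn4} $B$ \emph{sqn4} $M$ gives $A$ \emph{sqn4} $G$ by two uses of transitivity, and restricting the identity ``$\gen{A,W}=AWAW$ for all $W\le G$'' to $W\le M$ gives $A$ \emph{sqn4} $M$.

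Now let $G$ be a non-solvable $Sq4T$-group of least order. By (a) every proper quotient of $G$ is a solvable $Sq4T$-group, so $G$ has a unique minimal normal subgroup $N$ (else $G$ embeds in a product of proper quotients and is solvable), and $N$ is non-solvable (else $N$ and $G/N$ solvable force $G$ solvable). Thus $N=T_{1}\times\dots\times T_{k}$ with each $T_{i}\iso T$ non-abelian simple. If $N\ne G$ or $k>1$, the component $T_{1}$ is subnormal in $G$, hence strongly $4$-quasinormal in $G$ by (b), hence an $Sq4T$-group by (b); but $T_{1}\iso T$ is non-abelian simple, which will contradict the reduced assertion below. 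In the remaining case $N=G$, $k=1$, so $G=T$ is itself non-abelian simple, again contradicting the reduced assertion. Hence Theorem \ref{Theorem A} reduces to: \emph{no non-abelian simple group is an $Sq4T$-group} — note that the $PT$-analogue of this fails, since $A_{5}$ is a $PT$-group, so the reduced assertion really needs the structure of simple groups.

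To prove it, one must for each non-abelian simple $S$ exhibit $A\le M\le S$ with $A$ \emph{sqn4} $M$, $M$ \emph{sqn4} $S$, and $A$ not \emph{sqn4} $S$. Since the subnormal strongly $4$-quasinormal subgroups of a simple group are only $1$ and $S$, the middle term $M$ must be a proper non-subnormal strongly $4$-quasinormal subgroup — typically a Sylow subgroup, a maximal subgroup, or a small $2$-subgroup — and $A$ a normal subgroup of $M$ (so automatically \emph{sqn4} $M$) admitting a \emph{bad pair} $(A,K)$, meaning $AKAK\subsetneq\gen{A,K}$. The computations in Examples \ref{e1} and \ref{e2} show that a dihedral overgroup $\gen{A,K}\iso D_{2n}$ with $n\ge5$, $A$ and $K$ being the generating reflections, and more generally any overgroup obeying the counting bound $|AK|^{2}<|\gen{A,K}|$, supplies such a pair. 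The prototype is $S=A_{5}$: with $H=\gen{(1\,2)(3\,4)}$ and $K=\gen{(1\,3)(2\,5)}$ one has $\gen{H,K}\iso D_{10}$ but $HKHK$ has only $8$ elements, so $H$ is not strongly $4$-quasinormal in $A_{5}$; yet $H$ is normal in the Klein four-group and Sylow $2$-subgroup $V=\gen{(1\,2)(3\,4),\,(1\,3)(2\,4)}$, which one checks to be strongly $4$-quasinormal in $A_{5}$ by running through the few conjugacy classes of subgroups $K$ and verifying $|VKVK|=|\gen{V,K}|$ each time. The chain $H$ \emph{sqn4} $V$ \emph{sqn4} $A_{5}$ then violates transitivity.

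The step I expect to be the real obstacle is exactly this last verification: that the intermediate subgroup $M$ is strongly $4$-quasinormal, i.e.\ $|MKMK|=|\gen{M,K}|$ for every subgroup $K$. For $A_{5}$ it is a short check, but to handle all non-abelian simple groups one must organize these set-product computations uniformly across the families of the classification (using Feit--Thompson to guarantee an involution and explicit generating pairs — an involution together with an element of small order — of the groups of Lie type to locate a bad pair for a cyclic subgroup of $M$). A cleaner route I would attempt first is to avoid the classification: show abstractly that in an $Sq4T$-group every subgroup of prime order, being squeezed between a suitable Sylow or Klein-four intermediate and $G$, is strongly $4$-quasinormal and therefore subnormal, and then apply a Baer-type subnormality criterion to force solvability. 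The difficulty there is proving that a strongly $4$-quasinormal subgroup of prime order is subnormal — just the kind of question studied, with appropriate care since $4$-quasinormal subgroups are not subnormal in general, by Cossey and Stonehewer in \cite{CosStew}.
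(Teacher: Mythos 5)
Your reduction is sound and, in its skeleton, matches the paper's: closure of the class under quotients (the paper's Lemma \ref{Lemma 3} together with the converse direction, which you correctly note is also needed and which your $WN$-device supplies, since $N\le X$ and $N\normal G$ give $X(WN)X(WN)=XWXW$), inheritance by suitable subgroups (the paper's Proposition \ref{Propos 1}; your weaker version for strongly $4$-quasinormal subgroups plus the subnormality observation is enough to reach the components of the minimal normal subgroup), and then the dichotomy simple/non-simple. Up to that point the argument is fine — indeed more carefully justified than the paper's own.

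The genuine gap is the step you yourself flag: after the reduction, the entire content of the theorem is the assertion that \emph{no non-abelian simple group is an $Sq4T$-group}, and the proposal does not prove it. You treat only $A_{5}$, and even there the chain $H$ \emph{sqn4} $V$ \emph{sqn4} $A_{5}$ rests on the unverified claim that the Sylow $2$-subgroup $V$ is strongly $4$-quasinormal in $A_{5}$; this is not automatic — for $K$ of order $5$ with $\gen{V,K}=A_{5}$ one has $|VK|=20$, and no counting argument forces $|VKVK|=60$, so the double-coset computation must actually be carried out. For the remaining simple groups you offer two strategies (a classification-driven search for bad involution pairs, or a Baer-type subnormality criterion for strongly $4$-quasinormal subgroups of prime order) but execute neither, and the second runs exactly into the difficulty you name: $4$-quasinormal subgroups need not be subnormal. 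For comparison, the paper's Case 2 takes a different and much shorter route — it selects proper strongly $4$-quasinormal subgroups $M_{1},M_{2}$ of greatest order, asserts $\left[M_{1},M_{2}\right]=G$, and claims that simplicity prevents the $M_{i}$ from having proper nontrivial strongly $4$-quasinormal subgroups — rather than exhibiting explicit transitivity-violating chains. Your instinct that this step is ``the real obstacle'' is exactly right, but an identified obstacle is not a proof: as it stands the proposal establishes the reduction and the non-simple case only, and the theorem remains unproved for simple $G$.
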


However, in general, a solvable group need not be $Sq4T$-group. For an
example of solvable group that is not $Sq4T$-group we might consider the
group $D_{12}$ introduced in Example \ref{e1}. It also should be remarked
that a solvable $Sq4T$-group need not be supersolvable , as the following
example shows.

\begin{example}
\label{Example 4} Let $G=A_{4}$. We show that every subgroup of $G$ is
strong $4$-quasinormal in $G$. For that we divide the subgroups into three
sets as follows:

(1) $W_{1}=\{A_{4},P=\left\langle (1,2)(3,4),(1,3)(2,4)\right\rangle ,1\}$ -
the normal subgroups in $G$. These subgroups are all strong $4$-quasinormal
in $G$.

(2) $W_{2}=\{\left\langle (1,2)(3,4)\right\rangle ,\left\langle
(1,3)(2,4)\right\rangle ,\left\langle (1,4)(2,3)\right\rangle \}$, the
subgroups of order $2$. If $H_{1}$ and $H_{2}$ are any two distinct
subgroups of $W_{2}$. Then $H_{1}H_{2}=H_{2}H_{1}$, so $%
H_{1}H_{2}H_{1}H_{2}=H_{1}H_{2}=P=\left\langle H_{1},H_{2}\right\rangle $.
Note that $\left[ H_{1},H_{2}\right] =1$. So far, all subgroups in $W_{2}$
are $4$-permutable with subgroups of $W_{1}$ and $W_{2}$. To conclude their
strong $4$-quasinormality we have to show that, (in (3)), they are $4$%
-permutable with all subgroups of order $3$.

(3) $W_{3}=\{\left\langle (1,2,3)\right\rangle ,\left\langle
(1,2,4)\right\rangle ,\left\langle (1,3,4))\right\rangle ,\left\langle
(2,3,4))\right\rangle \}$, the subgroups of order $3$. We need to consider
two cases:

case 1. the subgroups of $W_{3}$ are $4$-permuting with each other. Let $%
K_{1}$ and $K_{2}$ be two distinct subgroups of $W_{3}$. A direct
computation shows that $K_{1}K_{2}K_{1}K_{2}=A_{4}=\left\langle
K_{1},K_{2}\right\rangle $. Moreover, $K_{1}K_{2}\neq K_{2}K_{1}$ implies $1<%
\left[ K_{1},K_{2}\right] \leq \left[ A_{4},A_{4}\right] =P$. That is $\left[
K_{1},K_{2}\right] =P$.

case 2. If $H\in W_{2}$ and $K\in W_{3}$, then we have $HKHK=A_{4}=\left%
\langle H,K\right\rangle $; hence every subgroup of $G$ is strongly $4$%
-quasinormal in $G$. That implies $G$ is $Sq4T$-group which is not
supersolvable.
\end{example}

\section{The Proofs}

In this section we prove Theorem \ref{Theorem A}. For that, we develop some
properties of strong $4$-quasinormal subgroups which in their own are
interesting facts.

We start with the following well-known properties about commutator subgroups.

\begin{lemma}
\label{Lemma 1} Let $H$ and $K$ be subgroups of $G$, Then:

(1) $\left[ H,K\right] \trianglelefteq \left\langle H,K\right\rangle $,

(2) $\left[ H,K\right] =\left[ K,H\right] $,

(3) $H$ normalizes $K$ if and only if $\left[ H,K\right] \leq K$.
\end{lemma}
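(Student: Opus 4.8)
The statement is part of the standard commutator calculus, so the plan is to deduce all three parts from the two elementary identities $[xy,z]=[x,z]^{y}[y,z]$ and $[x,yz]=[x,z]\,[x,y]^{z}$, valid for all $x,y,z\in G$ (with the conventions $[x,y]=x^{-1}y^{-1}xy$ and $x^{y}=y^{-1}xy$); alternatively one could simply cite any standard text.

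First I would prove $(2)$: since $[k,h]=\bigl(h^{-1}k^{-1}hk\bigr)^{-1}=[h,k]^{-1}$, the generators $[k,h]$ of $[K,H]$ are exactly the inverses of the generators $[h,k]$ of $[H,K]$, so $[K,H]\le[H,K]$ (a subgroup contains the inverses of its generators), and interchanging the roles of $H$ and $K$ gives the reverse inclusion. For $(1)$ it suffices to show that $H$ and $K$ each normalize $[H,K]$, since then so does $\langle H\cup K\rangle=\langle H,K\rangle$. Given $h_{0}\in H$, the first identity gives $[h,k]^{h_{0}}=[hh_{0},k]\,[h_{0},k]^{-1}\in[H,K]$ (as $hh_{0},h_{0}\in H$), so conjugation by $h_{0}$ maps the generating set of $[H,K]$ into $[H,K]$ and hence fixes it setwise. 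Symmetrically, for $k_{0}\in K$ the second identity gives $[h,k]^{k_{0}}=[h,k_{0}]^{-1}[h,kk_{0}]\in[H,K]$, so $K$ normalizes $[H,K]$ too; therefore $\langle H,K\rangle$ normalizes $[H,K]$, which is $(1)$.

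Finally, for $(3)$ I would use $(2)$ to write $[H,K]=[K,H]$ as the subgroup generated by the elements $[k,h]=k^{-1}k^{h}$ with $k\in K$, $h\in H$. If $H$ normalizes $K$ then each $k^{h}\in K$, so each generator $k^{-1}k^{h}\in K$ and thus $[H,K]\le K$. Conversely, if $[H,K]\le K$ then $k^{h}=k\cdot(k^{-1}k^{h})\in K$ for every $k\in K$, $h\in H$, so $K^{h}\subseteq K$; since conjugation by $h$ is a bijection of the finite set $K$, this forces $K^{h}=K$, i.e.\ $h\in N_{G}(K)$, and as $h$ ranges over $H$ this says $H$ normalizes $K$. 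There is no real obstacle here: the only care needed is to fix the conjugation/commutator conventions at the outset and apply the two identities consistently.
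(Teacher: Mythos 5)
Your proof is correct; it is the standard commutator-calculus argument. Note that the paper offers no proof of this lemma at all --- it introduces it with ``the following well-known properties about commutator subgroups'' and states it without justification --- so there is nothing to compare against, and your derivation from the identities $[xy,z]=[x,z]^{y}[y,z]$ and $[x,yz]=[x,z]\,[x,y]^{z}$ is exactly what a self-contained write-up would supply. The only point to tidy: in part (1), showing that conjugation by $h_{0}$ sends the generators of $[H,K]$ into $[H,K]$ gives $[H,K]^{h_{0}}\leq [H,K]$; to conclude equality (``fixes it setwise'') either invoke finiteness, as you do in part (3), or apply the same computation to $h_{0}^{-1}$ --- either is immediate here since all groups in the paper are finite.
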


\begin{lemma}
\label{Lemma 2} Let $H\leq K$ be subgroups of $G$. If $H$ \ $sq4$ \ $G$,
then $H$ \ $sq4$ $K$
\end{lemma}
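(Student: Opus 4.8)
The plan is to unwind the definition of strong $4$-quasinormality and observe that passing from the ambient group $G$ to the intermediate subgroup $K$ only shrinks the family of test subgroups over which the defining condition must be verified. So the statement should follow with essentially no work, once one is careful about one point: that the join $\langle H,L\rangle$ is an ``absolute'' construction, independent of whether it is formed inside $K$ or inside $G$.

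Concretely, I would proceed as follows. Let $L$ be an arbitrary subgroup of $K$. Since $K\leq G$, we have $L\leq G$ as well, so the hypothesis $H$ \emph{sqn4} $G$ applies to the pair $(H,L)$ and yields $\langle H,L\rangle = HLHL$ as subsets of $G$. Now $H\leq K$ and $L\leq K$, so every element appearing in the product set $HLHL$ lies in $K$; moreover the subgroup generated by $H\cup L$ is the intersection of all subgroups of $G$ containing $H\cup L$, and this coincides with the intersection of all subgroups of $K$ containing $H\cup L$ (since $K$ itself is one such subgroup, and any subgroup of $K$ is a subgroup of $G$). Hence the equality $\langle H,L\rangle = HLHL$ is valid verbatim when interpreted inside $K$. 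As $L$ was an arbitrary subgroup of $K$, this is exactly the statement that $H$ \emph{sqn4} $K$.

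There is no real obstacle here; the only step requiring a moment's care is the remark that $\langle H,L\rangle$ does not depend on the ambient group, which is a standard fact about generated subgroups and could be invoked without further comment. I would therefore keep the proof to a couple of sentences.
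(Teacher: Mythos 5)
Your proposal is correct and takes the same route as the paper, which simply states that the lemma ``follows from the definition of strong $4$-quasinormality''; you have merely written out that one-line observation in full, including the (harmless but worth noting) point that $\left\langle H,L\right\rangle$ is the same whether formed in $K$ or in $G$. Nothing further is needed.
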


\begin{proof}
This follows from the definition of strong $4$-quasinormality.
\end{proof}

\begin{lemma}
\label{Lemma 3} Let $N\leq H$ \ be two subgroups of $G$ such that $%
N\trianglelefteq G$. If $H$ $sq4$ $G$. Then $H/N$ $sq4$ $G/N$.
\end{lemma}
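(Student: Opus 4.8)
The plan is to verify the defining $4$-permutability relation for $H/N$ directly in the quotient $G/N$, reducing it to the corresponding relation for $H$ in $G$ via the correspondence theorem. First I would take an arbitrary subgroup of $G/N$; by the correspondence theorem it has the form $K/N$ for a unique subgroup $K$ of $G$ with $N \leq K \leq G$. What must be shown is that $\left\langle H/N, K/N\right\rangle = (H/N)(K/N)(H/N)(K/N)$.

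The key computational observation is that the canonical projection $\pi\colon G \to G/N$ interacts well with both sides of this identity. On the one hand, $\pi$ sends the product set $HKHK$ onto the product set $(HN/N)(KN/N)(HN/N)(KN/N)$, and since $N \leq H$ and $N \leq K$ we have $HN = H$ and $KN = K$, so the image is exactly $(H/N)(K/N)(H/N)(K/N)$. On the other hand, $\pi$ sends $\left\langle H, K\right\rangle$ onto $\left\langle H/N, K/N\right\rangle$, because a surjective homomorphism carries the subgroup generated by a set onto the subgroup generated by the image of that set. Now apply the hypothesis $H \; sq4 \; G$: since $K$ is a subgroup of $G$ (here we use $N \leq K$, so $K$ is genuinely a subgroup containing $N$), we have $\left\langle H, K\right\rangle = HKHK$ in $G$. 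Applying $\pi$ to both sides and using the two identifications above yields $\left\langle H/N, K/N\right\rangle = (H/N)(K/N)(H/N)(K/N)$, as required. Since $K/N$ was an arbitrary subgroup of $G/N$, this proves $H/N \; sq4 \; G/N$.

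The only real subtlety — and the one point worth stating carefully rather than the main obstacle, since there isn't a genuinely hard step here — is that the identity $\pi(HKHK) = \pi(H)\pi(K)\pi(H)\pi(K)$ holds because $\pi$, being a group homomorphism, satisfies $\pi(AB) = \pi(A)\pi(B)$ for arbitrary subsets $A, B$, so this extends to any finite product of subsets; and that the right-hand side, computed in $G/N$, equals $(H/N)(K/N)(H/N)(K/N)$ precisely because $HN/N = H/N$ and $KN/N = K/N$ when $N$ is contained in both $H$ and $K$. No appeal to finiteness or solvability is needed; this is a purely formal consequence of the definition of strong $4$-quasinormality together with the behaviour of products and generated subgroups under a quotient map. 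I would present it in three or four lines exactly along these lines.
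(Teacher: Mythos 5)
Your argument is correct and is essentially the same as the paper's: both verify the defining identity by pushing $\left\langle H,K\right\rangle = HKHK$ through the quotient map and identifying $(HKHK)/N$ with $(H/N)(K/N)(H/N)(K/N)$. Your version merely spells out the justifications (correspondence theorem, $\pi(AB)=\pi(A)\pi(B)$, $HN=H$) that the paper leaves implicit.
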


\begin{proof}
Assume that $H$ $sq4$ $G$, and let $K/N$ be any subgroup in $G/N$. Then $%
(H/N)(K/N)(H/N)(K/N)=(HKHK)/N=\left\langle H,K\right\rangle /N=\left\langle
H/N,K/N\right\rangle $. Hence, $H/N$ $sq4$ $G/N$.
\end{proof}

\begin{proposition}
\label{Propos 1} If $G$ is $Sq4T$-group and $H$ is subgroup of $G$. Then $H$
is $Sq4T$-group.
\end{proposition}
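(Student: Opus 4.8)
The plan is to verify the transitivity of strong $4$-quasinormality directly inside $H$. Fix subgroups $A\le B\le H$ with $A$ \emph{sqn4} $B$ and $B$ \emph{sqn4} $H$; the goal is to show $\langle A,K\rangle=AKAK$ for every subgroup $K$ of $H$. The first step is a localization. Given such a $K$, set $L=\langle B,K\rangle$. Since $B$ \emph{sqn4} $H$ and $K\le H$, we have $L=BKBK\le H$, and $L$ contains both $B$ and $K$. By Lemma \ref{Lemma 2}, $B$ is strongly $4$-quasinormal in $L$, while $A$ \emph{sqn4} $B$ persists unchanged. Hence it suffices to prove that $A$ is strongly $4$-quasinormal in $L$: once that is known, $K\le L$ forces $\langle A,K\rangle=AKAK$, which is exactly what is needed.

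This reduces the proposition to the statement: whenever $A\le B\le L\le G$ with $A$ \emph{sqn4} $B$ and $B$ \emph{sqn4} $L$, one has $A$ \emph{sqn4} $L$. The natural way to invoke the hypothesis on $G$ is through the three-term chain $A\le B\le G$: if $B$ were strongly $4$-quasinormal in $G$, then $A$ \emph{sqn4} $B$ together with $B$ \emph{sqn4} $G$ would give $A$ \emph{sqn4} $G$ by the $Sq4T$ property, and Lemma \ref{Lemma 2} would bring this back down to $A$ \emph{sqn4} $H$, finishing the proof. So the whole argument hinges on passing from strong $4$-quasinormality in $H$ (or in $L$) to strong $4$-quasinormality in $G$.

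That passage is the main obstacle, since in general a subgroup can be strongly $4$-quasinormal in an intermediate subgroup without being so in $G$. One clean way to bypass it would be to first show that in an $Sq4T$-group every subgroup is automatically strongly $4$-quasinormal in the whole group (all the examples of Section~2 have this feature); granting that, $H$ \emph{sqn4} $G$ holds for free and the chain argument above applies verbatim. Failing such a lemma, I would argue by induction on $|G|$: choosing a minimal normal subgroup $N$ of $G$, Lemma \ref{Lemma 3} pushes the hypotheses into $G/N$, where the inductive hypothesis applies to $HN/N$, and one then lifts the resulting $4$-permutability back to $G$, treating the cases $N\le H$ and $N\not\le H$ separately. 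When no such reduction is available one is forced into a direct computation, estimating the product $AKAK$ inside $\langle A,K\rangle$ and using Lemma \ref{Lemma 1} to control the commutator $[A,K]\trianglelefteq\langle A,K\rangle$ so as to see that $AKAK$ is already a subgroup. I expect the lifting across $G/N$ together with this direct base-case bookkeeping to be the technical heart of the proof.
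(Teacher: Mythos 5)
Your write-up correctly isolates the crux of the matter, namely that the hypothesis ``$G$ is $Sq4T$'' can only be invoked on a chain reaching all the way up to $G$, so one must somehow pass from $B$ \emph{sqn4} $H$ to strong $4$-quasinormality in $G$ itself. But the proposal stops exactly there: none of the three strategies you list (showing every subgroup of an $Sq4T$-group is automatically \emph{sqn4} in $G$, inducting on $|G|$ through $G/N$ and lifting back, or a direct commutator computation) is actually carried out, and each of them is precisely where the difficulty lives. The conjectured lemma that every subgroup of an $Sq4T$-group is strongly $4$-quasinormal in the whole group is asserted only on the strength of the examples in Section~2 and is not proved (nor is it obviously true). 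Likewise, the lifting of $4$-permutability from $G/N$ back to $G$ is the kind of step that genuinely fails for a product-of-four-factors condition: knowing that $AKAK$ maps onto a subgroup of $G/N$ does not force $AKAK$ to be a subgroup of $G$ unless $N$ is absorbed into the product, which you do not establish. As it stands, then, the proposal is a correct reduction plus a statement of intent, not a proof.

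For comparison, the paper attacks the statement with a double minimal-counterexample argument: take $G$ of minimal order and split according to whether the offending subgroup $H$ is or is not \emph{sqn4} in $G$. Its Case~1 is exactly the easy half you also found: if $H$ \emph{sqn4} $G$, then from $H_{2}$ \emph{sqn4} $H_{1}$ \emph{sqn4} $H$ one chains up to $G$ by applying the $Sq4T$ property twice and comes back down with Lemma \ref{Lemma 2}. Its Case~2, where $H$ is not \emph{sqn4} in $G$, is handled by taking $H$ minimal with that defect, choosing a witness $L$ with $LMLM\neq \left\langle L,M\right\rangle $, and analysing $\left\langle L,N\right\rangle $; this is the content that replaces your unproved lemma. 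You would need to supply an argument of comparable substance for the case $H$ not \emph{sqn4} $G$ before your proposal can count as a proof.
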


\begin{proof}
(Of Proposition \ref{Propos 1}.) Assume that the proposition is false and
let $G$ \ be a group of minimal order for which the proposition does not
hold. Let $H$ \ be a proper subgroup of $G$ that is not $Sq4T$-group. For $H$
we consider two cases:

\textit{Case 1}. $H$\emph{\ is }$sq4$\emph{\ in }$G$. If $H_{2}$ and $H_{1}$
are subgroup of $H$ such that: $H_{2}$ $sq4$ $H_{1}$, $H_{1}$ $sq4$ $H$. We
need to have $H_{2}$ $sq4$ $H$. But, $G$ is $Sq4T$-group, so $H_{1}$ $sq4$ $H
$ $sq4$ $G$ implies $H_{1}$ $sq4$ $G$. Now, $H_{1}$ $sq4$ $H_{2}$ $sq4$ $G$
implies $H_{1}$ $sq4$ $G$. From Lemma \ref{Lemma 2}. We have $H_{1}$ $sq4$ $H
$. Therefore, $H$ is $Sq4T$-group. Contradiction.

\textit{Case 2}. $H$\emph{\ is not }$sq4$\emph{\ in }$G$, let $H$ be a
subgroup of smallest order with this property. Let $M$, and $N$ be two
subgroups of $H$ such that $M$ $sq4$ $N$ $sq4$ $H$, and suppose $M$ is not $%
sq4$ $H$. Then $H$ \ has subgroup $L$ such that $LMLM\neq \left\langle
L,M\right\rangle $. Now consider $\left\langle L,N\right\rangle =LNLN$.

If \ $\left\langle L,N\right\rangle =H$, then $\left[ L,N\right]
\trianglelefteq H$. Since $N$ $sq4$ $H$, then $\left[ L,M\right]
\trianglelefteq H$. Therefore, $M$ is $sq4$ $H$, and $H$ is $Sq4T$-group
Contradiction. Therefore, $\left\langle L,N\right\rangle \neq H$. From Lemma %
\ref{Lemma 2}. We have $M$ $sq4$ $\left\langle L,N\right\rangle $ and $L\leq
\left\langle L,N\right\rangle $. It is clear that $\left\langle
L,N\right\rangle $ is not $Sq4T$-group and $\left\vert \left\langle
L,N\right\rangle \right\vert $ is less that $\left\vert H\right\vert $.
Contradiction and the proposition is proved.
\end{proof}

\begin{proof}
(Of Theorem \ref{Theorem A}) By Contradiction. Let $G$ be a $Sq4T$- group of
smallest order that is not solvable. From Proposition \ref{Propos 1}. we
have every subgroup of $G$ is $Sq4T$-group. By induction all proper
subgroups of $G$ are solvable. So, $G$ is minimal non-solvable group. For $G$
we consider two cases:

\textit{Case 1}. If $G$ is not simple. Let $M$ be a maximal normal subgroup
of $G$. From Proposition \ref{Propos 1}. $M$ \ is $Sq4T$-group. By induction
$M$ must be solvable. From Lemma \ref{Lemma 3} $G/M$ is $Sq4T$-group. By
induction, $G/M$ is solvable. Hence, $G$ is solvable. Contradiction.

\textit{Case 2}. If $G$ is simple group. Let $M_{1}$, and $M_{2}$ be proper
subgroups of greatest order such that $M_{i}$ $sq4$ $\ G$ \ ($1\leq i\leq 2$%
). Then it is clear that $\left[ M_{1},M_{2}\right] =G$. Now the simplicity
of $G$ prevent $M_{i}$ form having any proper non trivial $sq4$ subgroups.
So, $G$ \ can not be $Sq4T$-group. Contradiction. Therefore, $G$ must be
solvable.
\end{proof}

\end{document}